\newtheorem{theorem}{Theorem}[section]
    \newtheorem{lemma}[theorem]{Lemma}
    \theoremstyle{definition}
    \theoremstyle{definition}
    \theoremstyle{definition}
    \newtheorem{ex}[theorem]{Example}
    \theoremstyle{definition}
    \theoremstyle{remark}
    \theoremstyle{remark}
    \newtheorem*{remark*}{Remark}
    \newcommand{\abs}[1]{\left\lvert{#1}\right\rvert}
    \newcommand{\paren}[1]{\left({#1}\right)}
    \newcommand{\ceil}[1]{\left\lceil {#1} \right\rceil}
    \newcommand{\set}[1]{\left\{{#1}\right\}}
\title{$p$-Groups with derived length three and three character degrees}
\author[N. Beike]{Nicolas Beike}
\address{Department of Mathematical Sciences, Kent State University, Kent, OH 44242, USA}
\email{nbeike@kent.edu}
\date{\today}
\keywords{Character degrees, derived length, solvable groups, $p$-groups}
\subjclass{Primary 20C15; Secondary 20D15}
\begin{document}
    \begin{abstract}
        In this paper we construct examples of $p$-groups with derived length three and three character degrees.
    \end{abstract} 

    \maketitle

    \section{Introduction}

          Throughout this paper all groups are assumed to be finite. Let $G$ be a group. If $G$ is solvable, we define $\mathrm{dl}(G)$ to be the derived length of $G$. For any group $G$ we let $\mathrm{Irr}(G)$ be the set of ordinary irreducible character of $G$ and $\mathrm{cd}(G)=\set{\chi(1) \mid \chi\in \mathrm{Irr}(G)}$ be the set of character degrees of $G$.

          The Isaacs-Seitz Conjecture claims that for any solvable group G, the inequality $\mathrm{dl}(G)\leq {\mathrm{cd}(G)}$ always holds. It is believed that this conjecture is true and that the actual bound will end up being logarithmic instead of linear. The conjecture has been proved when $\abs{\mathrm{cd}(G)}=3,4,5$ by Isaacs \cite{Isa}, Garrison \cite{Gar} in his dissertation, and Lewis \cite{Lew} respectively. 
    
          Here we look at the case where there are 4 character degrees. We are interested in the case where we have equality. So when $\mathrm{dl}(G)=4=\abs{\mathrm{cd}(G)}$. There has been multiple papers regarding this case. We are continuing the work of Du and Lewis \cite{DuLew} and looking at one of the few remaining cases where the character degrees are powers of a single prime. Our goal is to classify all groups of this type. In this case we have that $G=V P$ where $P$ is a $p$-group and $V$ is a normal $p$-complement and the product is either direct or semidirect. In the semidirect case the situation can be reduced to $P$ being a $p$-group with derived length 3 and having 3 character degrees. We are interested in understanding the $p$-groups here. 

          In this paper, we construct a few examples of $p$-groups with derived length 3 and 3 character degrees. Most of the examples are groups where $\abs{P}=p^6$. In section 2 we discuss the examples given in chapter 26 of Huppert's character theory text \cite{Hup}. These examples are constructed as extensions of extraspecial groups of order $p^{2n+1}$ for $n\geq 2$ and $p\geq 5$ of exponent $p$. The extensions are by cyclic groups of order $p$. In Section 3 we discuss an example given by Noritzsch in \cite{Nor}. This is an extension of the extraspecial group of order $3^5$ of exponent $3$ by a cyclic group of order $9$. In Theorem 3.1 we show that for other odd primes an extension by a cyclic group of order $p^2$ is not possible. 
          
          In section 4 we give an outline of a method to constuct more examples. This involves taking a group of order $p^5$ with derived length 2 and extending it by a cyclic group of order $p$. The automorphism constructed has to satisfy a few different conditions to work. Lemma 4.1 gives us a way to compute the order of the automorphisms we construct without having to compute by hand. In section 5 we give two more constructions for groups with derived length 3 and 3 character degrees. Example 5.1 extends the extraspecial groups of order $p^5$ of exponent $p^2$ where $p\geq 5$. Example 5.2 steps away from the previous examples by extending a nonextraspecial group. The presentation for the base group is given later but for $p=7$ this is SmallGroup(16807,72) \cite{Smallgrp}. There are 10 base groups that we know of so far that can be extended in a similar way. 
          
          For the remainder of the paper if $\alpha :G\to G$ for a group $G$, then we let $g^{\alpha}=\alpha(g)$ for all elements $g\in G$. We also let $[g,\alpha]=g^{-1}g^{\alpha}$.

          This work was completed while the author was a Ph.D student at Kent State University under the supervision of Professor Mark Lewis. All or part of this work may appear as part of the author's Ph.D dissertation.

    \section{Huppert's Examples}

        Huppert in \cite{Hup} 26.3(b) gives examples of $p$-groups with derived length 3 and 3 character degrees. 

        In the first example we will construct a family of groups with 3 character degrees and derived length 3 that are extensions of extraspecial groups of exponent $p$. We require that the order of the extraspecial group is at least $p^{7}$ and $p>3$.
        
        \begin{ex}
            Let $G$ be an extraspecial group of order $p^{2n+1}$ and exponent $p$. Let $p\geq 3$ and $n\geq 3$. Write $G=\langle x_1,x_2,...,x_n,y_1,y_2,...,y_n\rangle$ where $[x_i,x_j]=1$, $[y_i,y_j]=1$, $[x_i,y_i]=c$ and $[x_i,y_j]=1$ for $i\neq j$. Define a map $\alpha:G\to G$ by 
            
            \begin{align*}
                \alpha(x_1)&=x_1\\
                \alpha(x_2)&=x_1x_2\\
                \alpha(x_3)&=x_2x_3 \\
                \alpha(x_i)&=x_i \text{ for } i>3\\
                \alpha(y_1)&=y_1y_2^{-1}y_3\\
                \alpha(y_2)&=y_2y_3^{-1} \\
                \alpha(y_i)&=y_i \text{ for } i>2
            \end{align*}

            Now $\alpha$ is surjective and it is not hard to check that $[x_i^{\alpha},y_j^{\alpha}]=[x_i,y_j]^{\alpha}$, $[x_i^\alpha,x_j^{\alpha}]=[x_i,x_j]^{\alpha}$ and $[y_i^{\alpha},y_j^{\alpha}]=[y_i,y_j]^{\alpha}$. Hence $\alpha$ is an automorphism of $G$. 
        
            Also we have that $\alpha^p$ acts trivially on all generators and $\alpha^n$ acts nontrivial for $0<n<p$. Thus $\alpha^p$ is the identity automorphism and has order $p$. 

            Construct $P=G\rtimes \langle \alpha \rangle$. Then $P'=\langle c, x_1, x_2, y_2,y_3\rangle$ and $P''=\langle x \rangle$. Now $P/P''$ is non-abelian and has normal abelian subgroup of index $p$ namely $G/P''=G/G'$. So by It\^{o}'s theorem (see 6.15 of \cite{Isa2}) $\chi(1)\mid p$ for all $\chi \in \mathrm{Irr}(P/P'')$. Thus $\mathrm{cd}(P/P'')=\set{1,p}$. Now let $\chi \in \mathrm{Irr}(G)$ such that $\chi(1)=p^n$. Then since $\chi$ vanishes off $Z(G)$ and $P/G$ is cyclic, then $\chi$ extends to $P$. In fact $\chi$ has $p$ such extensions and there are $p-1$ characters like $\chi$ that extend. Hence there are $p(p-1)$ characters of $P$ with degree $p^n$ with $Z(G)=G'=P''$ not in the kernel. Now observe that 
            
            \begin{align*}
                |P|&=\sum_{\chi \in \mathrm{Irr}(P)} \chi(1)^2\\
                &=p^{2n+2}\\
                &=p^{2n+1}+p^{2n+2}-p^{2n+1}\\
                &=p^{2n+1}+p(p-1)p^{2n}\\
                &=|P/P''|+p(p-1)p^{2n}\\
                &=\sum_{\chi\in \mathrm{Irr}(P/P'')}\chi(1)^2+\sum_{\substack{\chi| \chi\in \mathrm{Irr}(G) \\ \text{extends to } P}} \chi(1)^2
            \end{align*}

            This describes all irreducible characters of $P$ and so $\mathrm{cd}(P)=\set{1,p,p^n}$. Hence $P$ is a group with derived length 3 and has 3 character degrees. 

            Thus we have constructed the desired family of groups.

        \end{ex}
        In the next example we follow a similar construction but work with the extraspecial group of order $p^5$ of exponent $p$ where $p>3$.
        
        \begin{ex}
            Let $G$ be an extraspecial group of order $p^5$ and exponent $p$ where $p>3$. Write $G=\langle a,b,x,y,c\rangle$ where $[a,b]=c$, $[x,y]=c$ and all other commutators are trivial. 

            Define a map $\alpha : G\to G$ by 
            
            \begin{align*} 
                \alpha(a)&=a\\
                \alpha(b)&=by^{-1}\\
                \alpha(x)&=ax\\
                \alpha(y)&=a^{-1}x^{-1}y\\
                \alpha(c)&=c
            \end{align*}

            Then $\alpha$ is surjective and $[g_1^{\alpha},g_2^{\alpha}]=[g_1,g_2]^{\alpha}$ for all generators $g_1,g_2$ of $G$. Hence $\alpha$ is an automorphism of $G$. Also $\alpha^p$ acts trivially and so $\alpha^3=1$. 
            
            Let $P=G\rtimes \langle \alpha \rangle$. Then by a similar argument as above, $P$ has derived length 3 and 3 character degrees. Alternatively we can observe that $P''>1$ thus $P$ has derived length $3$. Since $\abs{P}=p^6$,  $\mathrm{cd} (P)\subseteq \set{1,p,p^2}$. $P$ is an $M$-group so $\mathrm{dl}(P)\leq \abs{\mathrm{cd}(P)}$. Thus $P$ must have 3 character degrees.

            Thus we have constructed the desired example. Putting 2.1 and 2.2 together, we have examples that are extensions of extraspecial groups of order $p^{2n+1}$ for $n\geq 2$, $p\geq 5$, of exponent $p$.

        \end{ex}

    \section{Noritzsch's Example}

        Noritzsch \cite{Nor} gives another example of a $p$-group with derived length 3 and 3 character degrees. We will construct this $p$-group from an extraspecial group of order $3^5$ being acted on by a cyclic group of order $9$.
        \begin{ex}
    
            Start with the extraspecial group of order $3^5$ with exponent $3$, $G$. Let $G=\langle a,b,x,y,c\rangle $ where $[a,b]=c$, $[x,y]=c$ and $a^3=c$. All other commutators are trivial and $b,x,y,c$ have order $3$. Now define a map $\alpha:G\to G$ by 
            
            \begin{align*} 
                \alpha(a)&=y\\
                \alpha(b)&=x^2b^2\\
                \alpha(x)&=aby\\
                \alpha(y)&=a^2y^2\\
                \alpha(c)&=c 
            \end{align*}

            Then similar to before, $\alpha$ is surjective and $[g_1^{\alpha},g_2^{\alpha}]=[g_1,g_2]^{\alpha}$ for all generators $g_1,g_2$ of $G$. Thus $\alpha$ is an automorphism of $G$. Also $\alpha^9$ acts trivially which $\alpha^3$ does not so $\alpha^9=1$. Let $P=G\rtimes \langle \alpha \rangle$. Then $P''=G'$ and so $G/P''$ is a normal abelian subgroup of $P/P''$. Thus $\chi(1)\mid \abs{P:G}=3^2$ for all $\chi\in \mathrm{cd}(P/P'')$. Hence $\mathrm{cd}(P/P'')\subseteq \set{1,3,3^2}$. Like before we have that the nonlinear irreducible characters of $G$ extend to $P$. There are exactly $3^2(3-1)$ irreducible characters of $P$ which are these extensions, each having degree $3^2$. Now 

            \begin{align*}
                \abs{P}&= 3^7\\
                &= 3^6+3^2(3-1)3^4\\
                &=\abs{P/P''}+\sum_{\substack{\chi| \chi\in \mathrm{Irr}(G) \\ \text{extends to } P}} \chi(1)^2
            \end{align*}

             This describes all irreducible characters of $P$ which are the irreducible characters of $P/P''$ and the characters of $G$ that extend to $P$. Thus $\mathrm{cd}(P)=\set{1,3,9}$. So $P$ is a group with derived length 3 and has 3 character degrees, completing the example.
        \end{ex}

        We note here that there do exist extensions of the extraspecial group of order $3^5$ and exponent $9$ like in the previous example. So the exponent of the group does not play a part. However the example does depend on the prime $3$ as shown by Theorem 3.3. First we need a lemma.

        \begin{lemma}
            If $G$ is a $p$-group where $G'$ is abelian, then 

            \[
                (gh)^n=g^nh^n[g,h]^{\binom{n}{2}} [g,h,h]^{\binom{n}{3}}...[g,h,...,h]^{\binom{n}{n}}
            \]
            where the last term has $n-1$ copies of $h$. 
        \end{lemma}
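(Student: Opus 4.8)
The plan is to argue by induction on $n$. The base case $n=1$ is immediate: every binomial coefficient $\binom{1}{k}$ with $k\geq 2$ vanishes, so the right-hand side collapses to $g^1h^1=gh$. For the inductive step I would write $(gh)^{n+1}=(gh)^n\cdot(gh)$, substitute the formula for $(gh)^n$ supplied by the inductive hypothesis, and then recollect the resulting word back into the shape asserted for exponent $n+1$.

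Two facts drive the collection. First, because $G'$ is abelian every iterated commutator $[g,\underbrace{h,\dots,h}_{k}]$ lies in $G'$, so these all commute with one another and the commutator tail of the product may be reordered freely; this is the one place the metabelian hypothesis is essential. Second, for any $a\in G'$ one has $ah=ha[a,h]$, i.e. pushing an element of $G'$ past a single $h$ costs exactly one commutator with $h$. Applying this with $a=[g,\underbrace{h,\dots,h}_{m-1}]$ (a commutator of weight $m$) yields $[a,h]=[g,\underbrace{h,\dots,h}_{m}]$, the commutator of weight $m+1$. Thus each commutator of weight $m$ "feeds" the one of weight $m+1$ whenever it is pushed across an $h$, and this is precisely the mechanism that manufactures the deeper left-normed commutators on the right-hand side.

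The crux is then the bookkeeping of exponents. The weight-$m$ factor arrives carrying exponent $\binom{n}{m}$ from the inductive hypothesis; when it is pushed past the extra $h$ coming from the trailing $(gh)$, the weight-$(m+1)$ contribution it creates must be added to the exponent $\binom{n}{m+1}$ already sitting at that weight. Pascal's identity $\binom{n}{m}+\binom{n}{m+1}=\binom{n+1}{m+1}$ is exactly what upgrades every exponent from $n$ to $n+1$ and closes the induction, while the commutativity of $G'$ from step (i) lets me carry out all of this without tracking the order of the commutator factors.

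The step I expect to be the main obstacle is making this collection fully precise: controlling how the single trailing $g$ and $h$ interact with the leading block $g^nh^n$ and the commutator tail, and verifying that after all the pushes the surviving terms are exactly the left-normed commutators $[g,\underbrace{h,\dots,h}_{m}]$ with the Pascal-updated exponents, with no stray commutators persisting. An alternative and arguably cleaner route is to linearize: regard $G'$ as a module under conjugation, record conjugation by $h$ as an operator $t$, and identify $[g,\underbrace{h,\dots,h}_{m}]$ with $[g,h]^{(t-1)^{m-1}}$. Then the binomial coefficients fall out of a binomial expansion of the relevant operator sum, which makes the appearance of the $\binom{n}{k}$ transparent and bypasses the delicate hand collection entirely.
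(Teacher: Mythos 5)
Your proposal has a genuine gap, and it sits exactly at the step you flag as ``the main obstacle'': the collection cannot be completed in the way you describe, because the identity as stated is not correct for general metabelian groups, and your proposed mechanism (new commutators arise only from pushing past $h$, with Pascal's identity upgrading $\binom{n}{m}$ to $\binom{n+1}{m+1}$) misses two phenomena. First, in the inductive step $(gh)^{n+1}=(gh)^n(gh)=g^nh^nC\cdot gh$ the trailing $g$ must be moved leftward across the commutator tail $C$; for $c\in G'$ this costs $cg=gc[c,g]$, producing left-normed commutators with a \emph{terminal $g$-entry}, $[g,h,\dots,h,g]$, which are not of the shape appearing in the statement and do not vanish under the hypothesis that $G'$ is abelian. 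Second, moving $g$ across $h^n$ produces $[h,g]$-oriented commutators, not $[g,h]$. Already at $n=2$ the exact identity, valid in any group with the paper's convention $[g,h]=g^{-1}h^{-1}gh$, is $(gh)^2=g^2h^2[h,g][h,g,h]$; in an extraspecial group of exponent $p$ (class $2$, so $[h,g,h]=1$) this reads $(gh)^2=g^2h^2[g,h]^{-1}\neq g^2h^2[g,h]$ for odd $p$. More generally, in a metabelian group of class $3$ one computes $(gh)^n=g^nh^n[g,h]^{-\binom{n}{2}}[g,h,g]^{-\binom{n}{3}}[g,h,h]^{-2\binom{n}{3}-\binom{n}{2}}$: the exponents are not the bare binomial coefficients, and a mixed term $[g,h,g]$ appears. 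So no amount of care in executing your collection will deliver the displayed formula.

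For comparison: the paper's own proof is the single sentence that the lemma ``follows by an easy induction argument,'' so you have not missed a hidden idea there --- the statement needs repair in the paper as well. What is true, and all that the application in Theorem 3.3 actually uses, is weaker: for $n=p$ every commutator factor in the exact expansion either has weight at least $p$ (hence dies when the class is less than $p$) or carries an exponent divisible by $p$ (hence dies when the relevant commutator subgroup has exponent $p$), which still yields $\alpha^p=\sigma^p\phi^p$. Moreover, in that application $g=\sigma$ lies in an abelian normal subgroup $I$ containing the derived subgroup, which is precisely what kills the terminal-$g$ commutators above. In that setting your alternative operator-theoretic route is the right tool and, carried out, would have exposed the error: writing $t$ for conjugation by $h$ and working in $I$, one gets $(gh)^n=g^nh^ng^{e_n}$ with $e_n=(t+t^2+\cdots+t^{n-1})-(n-1)t^n=\sum_{m\geq 1}\bigl(\binom{n}{m+1}-(n-1)\binom{n}{m}\bigr)(t-1)^m$, i.e.\ the weight-$(m+1)$ commutator $[g,h,\dots,h]$ enters with exponent $\binom{n}{m+1}-(n-1)\binom{n}{m}$, not $\binom{n}{m+1}$ (these agree modulo $p$ with what the application needs, since both binomials vanish mod $p$ in the relevant range). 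The hand-collection route as you outlined it, with Pascal fed only by $h$-pushes, would silently produce a formula that is false; the linearization would produce the corrected one.
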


        \begin{proof}
            This follows by an easy induction argument inducting on $n$.
        \end{proof}

        \begin{theorem}
            Let $G$ be an extraspecial group of order $p^{2n+1}$. Then for $p> 2n$ there does not exist an automorphism of $G$ of order $p^n$ for $n>1$. In particular for $n=5$, $p=3$ is the only group with an automorphism of order $p^2$ for odd primes. 
        \end{theorem}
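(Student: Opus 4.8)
The plan is to prove the much stronger statement that, when $p>2n$, \emph{every} $p$-element of $\aut(G)$ has order dividing $p$; since an automorphism of order $p^n$ with $n>1$ would be a $p$-element of order at least $p^2$, this suffices. So let $\alpha$ be a $p$-element of $\aut(G)$. First I would record the standard structural facts: $Z(G)=G'=\Phi(G)$ has order $p$, and $\aut(Z(G))$ has order $p-1$, so $\alpha$ acts trivially on $Z(G)$; meanwhile $V:=G/Z(G)$ is elementary abelian of rank $2n$ (in both exponent types), and the commutator induces a nondegenerate alternating form on it over $\mathbb{F}_p$. Since $\alpha$ is a $p$-element fixing $Z(G)$ pointwise, the induced map $\bar\alpha\in\mathrm{GL}(V)$ is unipotent (indeed symplectic); write $\bar\alpha=1+N$ with $N$ nilpotent, so that $N^{2n}=0$ because $\dim V=2n$.

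The first key step is that $\bar\alpha^p=1$. Expanding, $\bar\alpha^p=(1+N)^p=\sum_{j\ge 0}\binom{p}{j}N^{j}$, and the hypothesis $p>2n$ forces this to collapse: the terms with $j\ge 2n$ vanish because $N^{2n}=0$, while for $1\le j\le 2n-1\le p-1$ the coefficient $\binom{p}{j}$ is divisible by $p$ and hence zero in $\mathbb{F}_p$. Thus only the $j=0$ term survives and $\bar\alpha^p=1$. Consequently $\alpha^p$ acts trivially on both $V$ and $Z(G)$; that is, $\alpha^p$ is a central automorphism, $g^{\alpha^p}=g\,\phi(\bar g)$ for some $\phi\in\mathrm{Hom}(V,Z(G))$.

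It remains to prove $\phi=0$, i.e. $\alpha^p=1$. For a generator $g$, iterating $g^\alpha=g\,[g,\alpha]$ gives $g^{\alpha^p}=g\prod_{i=0}^{p-1}[g,\alpha]^{\alpha^{i}}$, so I must show $\prod_{i=0}^{p-1}[g,\alpha]^{\alpha^{i}}$ is trivial. Reading it first in $V$, its image is $T\,N\bar g$ where $T=\sum_{i=0}^{p-1}\bar\alpha^{i}=\sum_{j\ge 0}\binom{p}{j+1}N^{j}$ by the hockey-stick identity; once more $p>2n$ makes every surviving coefficient $\binom{p}{j+1}$ (with $1\le j+1\le 2n\le p-1$) vanish modulo $p$, so $T=0$ and the product lies in $Z(G)$. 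To pin down the remaining central value I would invoke Lemma 3.2: since $G'=Z(G)$ is central, collecting the product into a single term produces only commutator corrections whose exponents are binomial coefficients $\binom{p}{m}$ with $1\le m\le p-1$, and these are again trivial because $\binom{p}{m}\equiv 0\pmod p$. Hence $g^{\alpha^p}=g$ for every generator, so $\alpha^p=1$.

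Finally I would draw the stated conclusions. Every $p$-element of $\aut(G)$ has order dividing $p$ when $p>2n$, so $G$ has no automorphism of order $p^n$ for any $n>1$. Specialising to $\abs{G}=p^5$, where $2n=4$, the hypothesis $p>2n$ becomes $p\ge 5$; thus no extraspecial group of order $p^5$ with $p\ge 5$ admits an automorphism of order $p^2$, whereas $p=3$ does by Noritzsch's example above, so $p=3$ is the only odd prime for which such an automorphism exists. The main obstacle is the last central computation: tracking the $Z(G)$-component of $\prod_{i}[g,\alpha]^{\alpha^{i}}$ through the commutator-collection of Lemma 3.2 and verifying that the exponents that appear really are binomial coefficients $\binom{p}{m}$ with $1\le m\le p-1$ (so that $p>2n$ annihilates them) is the delicate part. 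It is also what makes the argument insensitive to whether $G$ has exponent $p$ or $p^2$, since only class-two commutator identities and the vanishing of $\binom{p}{m}$ modulo $p$ are used.
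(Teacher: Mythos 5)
Your opening moves are correct and genuinely more elementary than the paper's for that half of the argument: reducing to the claim that every $p$-element of $\aut(G)$ has order dividing $p$, noting that such an $\alpha$ fixes $Z(G)$ pointwise, and replacing the paper's appeal to Winter's theorem and the exponent of a Sylow $p$-subgroup of $\mathrm{GL}(2n,p)$ by the direct unipotent computation $(1+N)^p=1$ and $\sum_{i=0}^{p-1}(1+N)^i=\sum_{j}\binom{p}{j+1}N^j=0$ when $N^{2n}=0$ and $p>2n$. Up to the conclusion that $\alpha^p$ is a central automorphism, i.e.\ $g^{-1}g^{\alpha^p}\in Z(G)$ for all $g$, everything checks out, and this is a real alternative to the paper's use of $\mathrm{Sp}(2n,p)$.

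The gap is exactly where you flag it, and it is genuine. First, Lemma 3.2 is a two-element power formula for $(gh)^n$; it simply does not apply to the $p$-fold product $\prod_{i=0}^{p-1}[g,\alpha]^{\alpha^i}$, whose factors are $p$ distinct elements of $G$ and whose central value depends not only on commutator corrections but on the central components of the factors $[g,\alpha]^{\alpha^i}$ themselves (the $\mathrm{Hom}(G/Z(G),Z(G))$-part of $\alpha$), which a pure collection identity never sees. Second, the asserted shape of the corrections --- ``binomial coefficients $\binom{p}{m}$ with $1\le m\le p-1$'' --- fails at the boundary $p=2n+1$, which is allowed by $p>2n$ and is precisely the highlighted case $\abs{G}=p^5$, $p=5$: any Hall--Petrescu-type collection of $g^{\alpha^p}$ carries a top term of weight $p$ with exponent $\binom{p}{p}=1$, and the ambient group $G\rtimes\langle\alpha\rangle$ can have class exactly $2n+1=p$ (take $N$ regular unipotent; then $\gamma_{2n+1}\supseteq Z(G)$), so this term is killed neither by $p\mid\binom{p}{m}$ nor by ``class $<p$''. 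It dies only for a finer reason --- each factor carries an extra $N$, since $[g,\alpha]$ maps to $N\bar g$, so the dangerous weight-$p$ terms involve $N^{2n}=0$ --- and your sketch contains no mechanism for this. The paper sidesteps the boundary entirely: via the splitting $H=I\rtimes S$ it writes $\alpha=\sigma\phi$ and works in $T=I\rtimes\langle\phi\rangle$, a group of order $p^{2n+1}$ and hence of class at most $2n<p$ \emph{strictly}, with $T'\le I$ abelian of exponent $p$, where Lemma 3.2 applies verbatim and yields $\alpha^p=\sigma^p\phi^p=1$ at once. If you want to stay in your framework, the clean repair in the exponent-$p$ case is to choose the $\bar\alpha$-invariant cocycle $\frac{1}{2}B$, so that $\alpha(w,z)=(Mw,\,z+\lambda(w))$ with $\lambda$ linear and $\alpha^p=\paren{M^p,\ \lambda\circ\sum_{i<p}M^i}=(1,0)$; but note the exponent-$p^2$ case then needs a separate argument for the normal form, a cost the paper's route through $I\rtimes S$ avoids uniformly.
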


        \begin{proof}
            If we let $G$ be an extraspecial group of order $p^{2n+1}$. Then let $H \leq \mathrm{Aut}(G)$ be the automorphisms that fix $\mathrm{Z}(G)$. Then by Theorem 1 of \cite{Win} $H/\mathrm{Inn}(G)$ is a subgroup of $\mathrm{Sp}(2n,p)$ which is a subgroup of $\mathrm{GL} (2n,p)$. The Sylow $p$-subgroups of $\mathrm{GL} (n,p)$ has exponent $p^{\ceil{\log_p(n)}}$. Thus for $p\geq 2n$ we have that the Sylow $p$-subgroups of $\mathrm{GL} (n,p)$ have exponent $p$. Thus so does $H/\mathrm{Inn}(G)$.

            It is known that for $p\geq 3$ that $H=I\rtimes S$. This is a folklore result and is easily proven. See this Math StackExchange post by Derek Holt with an outline of the proof \cite{Holt}. Let $\alpha\in \mathrm{Aut}(G)$ with $o(\alpha)=p^n$. Then $\alpha$ acts trivially on $Z(G)$. Hence $\alpha \in H$. So $\alpha=\sigma \phi$ where $\sigma \in I$ and $\phi \in S$. Let $T=I\rtimes \langle \phi\rangle$. Then $T'=I'[I,\phi]\langle \phi\rangle'=[I,\phi]\leq I$ Since $I$ is abelian, so is $T'$. Additionally $\abs{T}=p^{2n+1}$ so $\mathrm{cl}(T)\leq 2n<p$ since $p$ is an odd prime. Now $[T,\alpha]\leq T'\leq I$ has exponent $p$. Hence by the previous lemma we have the following 
            \[
                \alpha^p=\sigma^p \phi^p [\sigma, \phi]^{\binom{p}{2}}... [\sigma, \phi, ..., \phi]^{\binom{p}{p}}
            \]
            where the last term has $p-1$ copies of $\phi$. Since $\mathrm{cl}(T)<p$ and $\mathrm{exp}([T,\phi])=p$ we are left with 
            \[
                \alpha^p=\sigma^p\phi^p=1
            \]
            Since $I$ has exponent $p$ and so does $S\cong H/I$. Hence $\alpha$ has order $p$ completing the proof.
        \end{proof}

        Thus this shows that Noritzsch's example does rely on the prime being $3$.

    \section{Outline for Constructions}

        In this section we give a brief outline for the construction of more groups with derived length 3 and 3 character degrees. In particular we take $p$-groups of order $p^5$ and extend them by a cyclic group of order $p$ for the constructions. 
        
        Let $G$ be a $p$-group of order $p^5$ that has derived length 2. Construct an automorphism $\alpha$ of $G$ with order $p$ such that in the semidirect product, $P=G\rtimes \langle \alpha \rangle$, we have $P''>1$. Then $P$ has derived length 3 and at least 3 character degrees. Since $P$ has order $p^6$, it cannot have more than 3 character degrees. In general we do the following steps to find an automorphism with the required properties. Each step is dealt with differently for different groups:

        \begin{enumerate}
            \item Let $G$ be a $p$-group of order $p^5$ and derived length $2$. 
            \item Write $G=\langle a,b,x,y,c | Relations \rangle$. 
            \item Define $\alpha:G\to G$ on the generators of $G$.
            \item Check that $\alpha$ satisfies the relations of $G$ i.e. $[g,h]^{\alpha}=[g^{\alpha},h^{\alpha}]$. Then $\alpha$ is a homomorphism.
            \item Find sufficient conditions so that $\alpha$ is surjective. Since $G$ is finite, then $\alpha$ is now an automorphism.
            \item Check that $P''>1$ where $P=G\rtimes \langle \alpha \rangle$.
            \item Check that $\alpha$ has order $p$. 
        \end{enumerate}

        Here is a useful lemma we use to compute the order of the automorphism in step 7 in our examples and avoid a long and tedius calculation.

        \begin{lemma}
            Let $\alpha$ be an automorphism of $G$ and suppose $G$ has nilpotence class at most 5. Then for $g\in G$ we have
            \[
                g^{\alpha^n}=g[g,\alpha]^{\binom{n}{1}}[g,\alpha,\alpha]^{\binom{n}{2}} [g,\alpha,\alpha,\alpha]^{\binom{n}{3}} [g,\alpha,\alpha,\alpha,\alpha]^{\binom{n}{4}} [[g,\alpha,\alpha],[g,\alpha]]^{\binom{n}{3}}
            \]  
            In particular if $[G,\alpha]$ has exponent $p$ and $p\geq 5$, then $o(\alpha)$ divides $p$.
        \end{lemma}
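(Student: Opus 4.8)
The plan is to realize $\alpha$ as an honest group element by passing to $P=G\rtimes\langle\alpha\rangle$, so that $g^{\alpha^n}$ becomes conjugation by $\alpha^n$ and each bracket $[g,\alpha]$, $[g,\alpha,\alpha],\ldots$ is a genuine left-normed commutator in $P$ (this is consistent with the paper's convention $[g,\alpha]=g^{-1}g^{\alpha}$). Write $c_i=[g,\alpha,\ldots,\alpha]$ with $i$ copies of $\alpha$, and $d=[[g,\alpha,\alpha],[g,\alpha]]=[c_2,c_1]$. Since $c_1=[g,\alpha]\in\gamma_2(P)$ and $c_i=[c_{i-1},\alpha]$, an easy induction gives $c_i\in\gamma_{i+1}(P)$ and $d\in\gamma_5(P)$. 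The first thing I would record is that all commutators of weight at least six in $g$ and $\alpha$ vanish, i.e. $\gamma_6(P)=1$; this is what the nilpotence hypothesis is there to guarantee (in the constructions $\abs{P}=p^6$, so the class is at most $5$). It forces the series to terminate, giving $c_5=[g,\alpha,\alpha,\alpha,\alpha,\alpha]=1$, and it makes $c_3,c_4,d$ commute with $c_1$ and $c_2$ (all such commutators lie in $\gamma_{\geq 6}(P)$), so that the only surviving nontrivial commutator among the $c_i$ is $[c_2,c_1]=d$, which is central modulo $\gamma_6(P)$.

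With this bookkeeping in place I would prove the displayed identity by induction on $n$. The recursions I would use are immediate from the definitions and the fact that $\alpha$ is an automorphism: $g^{\alpha}=gc_1$, $c_i^{\alpha}=c_ic_{i+1}$ (with $c_5=1$, so $c_4^{\alpha}=c_4$), and $d^{\alpha}=[c_2c_3,c_1c_2]=d$ modulo $\gamma_6(P)=1$. Applying $\alpha$ to the inductive hypothesis and distributing over the product turns $g^{\alpha^{n+1}}$ into $(gc_1)(c_1c_2)^{\binom n1}(c_2c_3)^{\binom n2}(c_3c_4)^{\binom n3}c_4^{\binom n4}d^{\binom n3}$. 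Because $c_3,c_4,d$ commute with $c_1,c_2$, I can pull them to the right, and their exponents add by Pascal's rule to give $\binom{n+1}3$ on $c_3$ and $\binom{n+1}4$ on $c_4$. The remaining $c_1,c_2$-content is $gc_1(c_1c_2)^{\binom n1}c_2^{\binom n2}$, and here the one place noncommutativity bites is the collection $(c_1c_2)^{m}$: since $c_2c_1=c_1c_2d$ with $d$ central, one gets $(c_1c_2)^{m}=c_1^mc_2^md^{\binom m2}$. Collecting everything then yields $c_1$-exponent $\binom{n+1}1$, $c_2$-exponent $\binom{n+1}2$, and total $d$-exponent $\binom n2+\binom n3=\binom{n+1}3$, which is exactly the claimed formula for $n+1$.

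For the final assertion I would set $n=p$ with $p\geq 5$. Each of $c_1,\ldots,c_4$ and $d$ lies in $[G,\alpha]$, which is a normal subgroup of $P$ and hence closed under forming commutators of its own elements, so under the hypothesis that $[G,\alpha]$ has exponent $p$ every one of these has order dividing $p$. Since $p$ is prime and the indices $1,2,3,4$ are all strictly less than $p$, the binomial coefficients $\binom p1,\binom p2,\binom p3,\binom p4$ are all divisible by $p$; hence every factor in the product collapses and $g^{\alpha^p}=g$ for all $g\in G$. Thus $\alpha^p$ is the identity and $o(\alpha)\mid p$.

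The step I expect to be the main obstacle is pinning down the nilpotence input rather than the algebra: the identity as written truncates at four copies of $\alpha$ and carries a single correction term $d^{\binom n3}$, and this is only correct once one knows that all weight-$\geq 6$ commutators in $g$ and $\alpha$ vanish and that $d$ is central. I would want to state this reduction carefully, since it is $\gamma_6(P)=1$ that makes the collection terminate; once that is granted, the remainder is routine commutator calculus together with Pascal's rule.
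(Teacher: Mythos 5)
Your proof is correct and takes essentially the same route as the paper's: both pass to $P=G\rtimes\langle\alpha\rangle$, induct on $n$ via the recursion $c_i^{\alpha}=c_ic_{i+1}$ (the paper's $([g,\alpha]_i)^{\alpha}=[g,\alpha]_i[g,\alpha]_{i+1}$), isolate the single central correction term $[[g,\alpha,\alpha],[g,\alpha]]$ arising from collecting $(c_1c_2)^m$, and combine exponents by Pascal's rule, with the final claim following from $p\mid\binom{p}{i}$ for $1\leq i\leq 4<p$. If anything your write-up is more careful than the paper's sketch, in particular in making explicit that the nilpotence hypothesis is really used as $\gamma_6(P)=1$ for the semidirect product, which the paper invokes implicitly when it writes $G_6=1$.
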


        \begin{proof}

            We will make use of the identity $g^{\alpha}=\alpha^{-1}g\alpha=g[g,\alpha]$ which holds for any element of $g$ when looking from the viewpoint of $G\rtimes \langle \alpha \rangle$.

            Let $[g,\alpha]_n=[g,\alpha,...,\alpha]$ where on the right hand side there are $n$ copies of $\alpha$. Then we have $([g,\alpha]_n)^{\alpha}=[g,\alpha]_n[g,\alpha]_{n+1}$. Since the group has class at most $5$ and $[[g,\alpha]_n,[g,\alpha]_{n+1}]\in [G_{n+1},G_{n+2}]\leq G_{2n+3}$, we have $([g,\alpha]_n^k)^{\alpha}=[g,\alpha]_n^k[g,\alpha]_{n+1}^k$ for $n>1$. For $n=1$ we have the following
            \begin{align*}
                ([g,\alpha]^k)^\alpha&=\paren{[g,\alpha][g,\alpha,\alpha]}^k\\
                &=[g,\alpha]^k[g,\alpha,\alpha]^k[[g,\alpha][g,\alpha,\alpha]]^{\binom{k-1}{2}}
            \end{align*}
            Where the second equality holds since $[[g,\alpha][g,\alpha,\alpha]]\in G_5$ and $[G_5,G]=G_6=1$

            Now the result follows by induction on $n$ and noting that the exponent on $[[g,\alpha][g,\alpha,\alpha]]$ is $\binom{n-1}{2}+\binom{n-1}{3}=\binom{n}{3}$ and $[[g,\alpha][g,\alpha,\alpha]]^{\alpha}=[[g,\alpha][g,\alpha,\alpha]]$.

            Now if $[G,\alpha]$ has exponent $p$ and $p\geq 5$ then taking $n=p$ we have that all terms in the identity  except for the first term are equal to 1. So $\alpha^p$ is the identity map. Thus $o(\alpha)\mid p$. 
        \end{proof}

    \section{Explicit Constructions}
        In this section we go through the construction of a few families of the groups in question. It is known that for primes $p=2,3$ all groups of order $p^6$ have derived length 2. Hence we consider primes $p\geq 5$.

        For our first example we construct a family of groups with 3 character degrees and derived length 3 that are extensions of extraspecial groups of exponent $p^2$

        \begin{ex}
            \textbf{(Step 1 and 2)} Let $G$ be an extraspecial group of order $p^5$ with exponent $p^2$. Write $G=\langle a,b,x,y,c \rangle$ where $[a,b]=c$, $[x,y]=c$, $a^p=c$, all other generators have order $p$ and all other commutators are trivial. 

            \textbf{(Step 3)} Define a map $\alpha:G\to G$ by 

            \begin{align*}
                &[a,\alpha]=b^{n_1}x^{n_2}y^{n_3} &&\text{or } &a^\alpha&=ab^{n_1}x^{n_2}y^{n_3}\\
                &[b,\alpha]=1 &&\text{or } &b^\alpha&=b\\
                &[x,\alpha]=b^{n_4}x^{n_5}y^{n_6} &&\text{or } &x^\alpha&=xb^{n_4}x^{n_5}y^{n_6}\\
                &[y,\alpha]=b^{n_7}x^{n_8}y^{n_9} &&\text{or } &y^\alpha&=yb^{n_7}x^{n_8}y^{n_9}\\
                &[c,\alpha]=1 &&\text{or } &c^\alpha&=c
            \end{align*}
            Where $n_1,...,n_9$ are integers (mod $p$). Right now there are no restrictions on $n_1,... n_9$. As we go through the remaining steps we will acquire some restrictions.

            \textbf{(Step 4)} Now we check the commutator relations. Since $c$ is central we have $[g^{\alpha},c^{\alpha}]=[g^{\alpha},c]=1=1^\alpha=[g,c]^\alpha$. So we are left to check the commutators not involving $c$. In the following we expand the commutators and collect variables. 

            \begin{align*}
                \begin{split}
                    [a^\alpha,b^\alpha]&=c
                \end{split}
                \intertext{We want this to be equal to $c$. So we get no restriction here.}
                \begin{split}
                    [a^\alpha,x^\alpha]&=c^{n_2 n_6+n_4-n_3-n_3 n_5}
                \end{split}
                \intertext{We want this to be 1. So we need $n_2 n_6+n_4-n_3-n_3 n_5\equiv 0 \pmod{p}$.}
                \begin{split}
                    [a^\alpha,y^\alpha]&=c^{n_2n_9+n_7+n_2-n_3n_8}
                \end{split}
                \intertext{We want this to be equal to 1. So we need $n_2n_9+n_7+n_2-n_3n_8\equiv 0 \pmod{p}$.}
                \intertext{Since $b$ commutes with $x$ and $y$ it is easy to see $[b^{\alpha},x^{\alpha}]=[b,x]^{\alpha}$ and $[b^{\alpha},y^{\alpha}]=[b,y]^{\alpha}$. Thus giving no additional restrictions}
                \begin{split}
                    [x^\alpha,y^\alpha]&=c^{1+n_5n_9+n_5+n_9-n_6n_8}
                \end{split}
                \intertext{We want this to equal $c$. So we need $n_5n_9+n_5+n_9-n_6n_8\equiv 0 \pmod{p}$.}
            \end{align*}

            Provided $n_1,...,n_9$ satisfy these conditions, then $\alpha$ is a homomorphism.

            \textbf{(Step 5)}

            Now we find sufficient conditions on $n_1,...,n_9$ so that $\alpha$ is surjective and thus an automorphism. If we let $n_5,n_8,n_9=0$ then clearly $\alpha$ is surjective.

            To summarize, the map $\alpha$ is now

            \begin{align*}
                &[a,\alpha]=b^{n_1}x^{n_2}y^{n_3} &&\text{or } &a^\alpha&=ab^{n_1}x^{n_2}y^{n_3}\\
                &[b,\alpha]=1 &&\text{or } &b^\alpha&=b\\
                &[x,\alpha]=b^{n_4}y^{n_6} &&\text{or } &x^\alpha&=xb^{n_4}y^{n_6}\\
                &[y,\alpha]=b^{n_7} &&\text{or } &y^\alpha&=yb^{n_7}\\
                &[c,\alpha]=1 &&\text{or } &c^\alpha&=c
            \end{align*}

            Where $n_2n_6+n_4-n_3\equiv 0 \pmod{p}$, and $n_7+n_2\equiv 0 \pmod{p}$.

            \textbf{(Step 6)}

            Let $P=G\rtimes \langle \alpha \rangle$. We need to ensure that $P''>1$. Computing the derived subgroup of $P$, we have $P'=\langle c, b^{n_1}x^{n_2}y^{n_3}, b^{n_4}y^{n_6}, b^{n_7} \rangle$. If $n_2$ or $n_6$ are $0$ then $P''=1$. Thus we need both $n_2,n_6\neq 0$ and then $P'=\langle b,x,y,c \rangle$ and $P''=\langle c \rangle$. 

            \textbf{(Step 7)}

            First we will compute the lower central series for $P$. 

            \begin{align*}
                P&=\langle a,b,x,y,c,\alpha \rangle\\
                P_2&=\langle b,x,y,c \rangle\\
                P_3&=\langle b,y,c\rangle \\
                P_4&=\langle b,c\rangle\\
                P_5&=\langle c \rangle\\
                P_6&=\langle 1 \rangle
            \end{align*}

            Hence $P$ has nilpotence class $5$ and $[P,\alpha]$ has exponent $p$. By Lemma 4.1 this implies that $\alpha$ has order $p$ as desired. Therefore $P$ is a group of order $p^6$ with derived length 3 and 3 distinct character degrees, giving the desired example.
        \end{ex}

        For the second example we will deviate from the other examples and construct an extension of a group that is not extraspecial with the desired properties. We take a $p$-group, whose presentation is given below, of order $p^5$ and extend by a cyclic group of order $p$. For $p=7$ this initial $p$-group is SmallGroup(16807,72) in the small groups library.

        \begin{ex}
            \textbf{(Steps 1 and 2)} 
            In 

            Let $G=\langle a,b,x,y,c\rangle$ where $[a,b]=c$, $[a,y]=b$, $[x,y]=c$, all other commutators are trivial, and $G$ has exponent $p$. 

            \textbf{(Step 3)}
            
            Define a map $\alpha:G\to G$ by 

            \begin{align*}
                &[a,\alpha]=b^{n_1}x^{n_2}y^{n_3} &&\text{or } &a^\alpha&=ab^{n_1}x^{n_2}y^{n_3}\\
                &[b,\alpha]=1 &&\text{or } &b^\alpha&=b\\
                &[x,\alpha]=b^{n_4}x^{n_5}y^{n_6} &&\text{or } &x^\alpha&=xb^{n_4}x^{n_5}y^{n_6}\\
                &[y,\alpha]=b^{n_7}x^{n_8}y^{n_9} &&\text{or } &y^\alpha&=yb^{n_7}x^{n_8}y^{n_9}\\
                &[c,\alpha]=1 &&\text{or } &c^\alpha&=c
            \end{align*}

            Similar to before $n_1,...,n_9$ are integers (mod p) and have no restrictions. As we proceed we will acquire some restrictions.

            \textbf{(Step 4)}

            Now we check the commutator relations. Like before, since $c$ is central we do not need to check commutators involving $c$.
            \begin{align*}
                \begin{split}
                    [a^\alpha,b^\alpha]&=c
                \end{split}
                \intertext{We want this to be equal to $c$ so we get no restrictions here.}
                \begin{split}
                    [a^\alpha,x^\alpha]&=c^{n_4-n_3-n_3n_5}
                \end{split} 
                \intertext{We want this to equal $1$ so we get the restriction that $n_4-n_3-n_3n_5\equiv 0 \pmod{p}$.}
                \begin{split}
                    [a^\alpha,y^\alpha]&=bc^{n_2+n_7-n_3n_8}
                \end{split}
                \intertext{We want this to equal $b$ so we need $n_2+n_7-n_3n_8\equiv 0 \pmod{p}$.}
                \intertext{Since $b$ commutes with $x$ and $y$ it is easy to see that $[b^\alpha,x^\alpha]=1=[b,x]^\alpha$ and $[b^\alpha,y^\alpha]=1=[b,y]^\alpha$.}
                \begin{split}
                    [x^\alpha,y^\alpha]&=c^{1+n_5}
                \end{split}
                \intertext{We want this to equal $c$ so we need $n_5=0$.}
            \end{align*}
            
            To summarize, the map $\alpha$ is now 
            \begin{align*}
                &[a,\alpha]=b^{n_1}x^{n_2}y^{n_3} &&\text{or } &a^\alpha&=ab^{n_1}x^{n_2}y^{n_3}\\
                &[b,\alpha]=1 &&\text{or } &b^\alpha&=b\\
                &[x,\alpha]=b^{n_4} &&\text{or } &x^\alpha&=xb^{n_4}\\
                &[y,\alpha]=b^{n_7}x^{n_8} &&\text{or } &y^\alpha&=yb^{n_7}x^{n_8}\\
                &[c,\alpha]=1 &&\text{or } &c^\alpha&=c
            \end{align*}
            Where $n_4-n_3\equiv 0 \pmod{p}$ i.e. $n_4=n_3$ and $n_7-n_3n_8\equiv 0 \pmod{p}$.

            \textbf{(Step 5)} It is easy to see that the map is now surjective. So there are no more restrictions from this step.

            \textbf{(Step 6)} Let $P=G\rtimes \langle \alpha \rangle$. Now $P'=\langle b,c, b^{n_1}x^{n_2}y^{n_3}, b^{n_4}, b^{n_7}x^{n_8} \rangle$. We need $n_3$ and $n_8$ to be nonzero in order for $P''>1$. 

            \textbf{(Step 7)} We can compute the lower central series for $P$ getting

            \begin{align*}
                P&=\langle a,b,x,y,c,\alpha \rangle\\
                P_2&=\langle b,c,x,y \rangle\\
                P_3&=\langle b,c,x \rangle\\
                P_4&=\langle b,c \rangle\\
                P_5&=\langle c \rangle\\
                P_6&=\langle 1 \rangle
            \end{align*}

            Hence $P$ has nilpotence class 5. So by a similar argument to the last example, we have $o(\alpha)=p$. Thus $P$ is a $p$-group with derived length 3 and 3 character degrees. This completes the example.
        \end{ex}

        There are 10 $p$-groups of order $p^5$, including the given examples, that we know of so far that can be extended in a similar manner. These groups come from computations with groups of order $7^5$ in  GAP \cite{GAP4}. The prime 7 was chosen so that it was small enough to do computations relatively fast while not running into issues like with 2 and 3. 

        The constructions given may not contain all isomorphism classes of extensions of these groups that have derived length 3 and 3 character degrees. Also we have seen overlap between the isomorphism classes of the extensions for different initial groups. So two different initial groups may be extended to the same group. We still do not know if there are extensions of extraspecial groups of order $5^7$ by a cyclic group of order 25 that have derived length 3 and 3 character degrees like in Noritzsch's example. The main idea of this paper was to give a way to construct more examples of these groups which by themselves can be hard to compute in GAP. We hope that having explicit constructions of these groups with help further the study of these $p$-groups. These groups are fundamental to the motivating problem in the Introduction, so any information that can be learned is beneficial.

\bibliographystyle{amsplain}
\bibliography{PGroup3cdAnddl3.bib}
\end{document}